\theoremstyle{plain}
\newtheorem{theorem}{Теорема}
\newtheorem*{theorem*}{Теорема}
\newtheorem*{corollary*}{Наслідок}
\newtheorem{lemma}{Лема}
\newtheorem*{lemma*}{Лема}
\newtheorem*{proposition*}{Твердження}
\newtheorem*{conjecture*}{Гіпотеза}
\theoremstyle{definition}
\newtheorem{definition}{Означення}
\newtheorem*{definition*}{Означення}
\theoremstyle{remark}
\newtheorem{remark}{Зауваження}
\newtheorem*{remark*}{Зауваження}
\begin{document}
2010 Mathematics Subject Classification 11K55, 26A09

УДК 517.5+517.13+511.72
\title[Нега-$\tilde Q$-представлення]{Нега-$\tilde Q$-зображення   дійсних чисел}
\author{ С. О. Сербенюк  }
\address{Інститут математики НАН України\\
         Київ, Україна}

\email{simon6@ukr.net; ~ simon.mathscience@imath.kiev.ua}

\begin{abstract}
У даній статті побудовано  нега-$\tilde Q$-представлення дійсних чисел, яке є узагальненням представлення чисел знакопочережними рядами Кантора. Для моделювання нега-$\tilde Q$-представлення використовується аналітичний та геометричний підходи. Недоліки та переваги кожного із способів досліджено, шукане представлення змодельовано. 

\textsc{Abstract.} The article is devoted to  modeling of the nega-$\tilde Q$-representation of real numbers. The representation is generalization of representation by  alternating Cantor series. Analytic and geometric approach are used for modeling of nega-$\tilde Q$-representation.  
Advantages and disadvantages of these approaches are investigated, the representation is  modeled.
\end{abstract}

\maketitle

\section{Вступ}

Нехай заданою є матриця $\tilde Q=||q_{i,j}||$, де  $i=\overline{0,m_j}$, $m_j \in N^{0} _{\infty}= \mathbb N \cup \{0,\infty\}$, $j=1,2,...$, для якої справедливою є наступна система властивостей:
\begin{equation}
\left\{
\begin{aligned}
\label{eq: tilde Q 1}
1^{\circ}. ~~~~~~~~~~~~~~~~~~~~~~~~~~ ~~~ ~~ q_{i,j}>0;\\
2^{\circ}.  ~~~~~~~~~~~~~~~\forall j \in \mathbb N: \sum^{m_j}_{i=0} {q_{i,j}}=1;\\
3^{\circ}.  \forall (i_j), i_j \in \mathbb N \cup \{0\}: \prod^{\infty} _{j=1} {q_{i_j,j}}=0.\\
\end{aligned}
\right.
\end{equation}
\begin{lemma}[{\cite[с.~88]{Pra98}}]
Для будь-якого $x \in [0;1)$ існує послідовність $(i_{k}(x))$, $i_{k}(x)\in~N ^{0} _{m_{k}}\equiv\\ \equiv~\{0,1,...,m_k\}$, така, що
\begin{equation}
\label{def: tilde Q 1}
x=a_{i_1(x),1}+\sum^{\infty} _{k=2} {\left[a_{i_k(x),k}\prod^{k-1} _{j=1} {q_{i_j(x),j}} \right]},
\end{equation}
де
$$
a_{i_k,k}=\begin{cases}
\sum^{i_k-1} _{i=0} {q_{i,k}},&\text{якщо $i_k \ne 0$,}\\
0,&\text{якщо $i_k=0$.}
\end{cases}
$$
\end{lemma}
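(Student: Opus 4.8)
The plan is to realize the expansion geometrically, through a nested family of half‑open ``cylinder'' intervals, and then to recover the digits $(i_k(x))$ and the series from the endpoints of these cylinders.

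\textbf{Step 1 (the cylinders).} For each level $k$ and each admissible string $(i_1,\dots,i_k)$ with $i_j\in N^{0}_{m_j}$ I would set
\[
\Delta_{i_1\dots i_k}=\Bigl[\,\ell_{i_1\dots i_k},\ \ell_{i_1\dots i_k}+\prod_{j=1}^{k}q_{i_j,j}\Bigr),
\]
where the left endpoints are defined recursively by $\ell_{i_1}=a_{i_1,1}$ and $\ell_{i_1\dots i_k}=\ell_{i_1\dots i_{k-1}}+a_{i_k,k}\prod_{j=1}^{k-1}q_{i_j,j}$, with $\Delta_{\emptyset}:=[0,1)$. An induction on $k$, using $2^{\circ}$ (so that the partial sums $\sum_{i=0}^{n}q_{i,k}$ run over $a_{0,k}=0,a_{1,k},\dots$ and reach $1$, or increase to $1$ when $m_k=\infty$), shows that for every fixed $(i_1,\dots,i_{k-1})$ the intervals $\{\Delta_{i_1\dots i_{k-1}i}:i\in N^{0}_{m_k}\}$ are pairwise disjoint and their union is exactly $\Delta_{i_1\dots i_{k-1}}$. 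In particular $\Delta_{i_1\dots i_k}\subset\Delta_{i_1\dots i_{k-1}}$ and the length of $\Delta_{i_1\dots i_k}$ is $\prod_{j=1}^{k}q_{i_j,j}$.

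\textbf{Step 2 (digits of $x$) and Step 3 (limit).} Fix $x\in[0,1)$. Since the level‑$1$ cylinders partition $[0,1)$ there is a unique $i_1(x)$ with $x\in\Delta_{i_1(x)}$; having chosen $(i_1(x),\dots,i_{k-1}(x))$ with $x\in\Delta_{i_1(x)\dots i_{k-1}(x)}$, the level‑$k$ subdivision of that cylinder selects a unique $i_k(x)\in N^{0}_{m_k}$ with $x\in\Delta_{i_1(x)\dots i_k(x)}$. Thus $(i_k(x))$ is defined and $x$ lies in every cylinder of its own address, so
\[
0\le x-\ell_{i_1(x)\dots i_k(x)}<\prod_{j=1}^{k}q_{i_j(x),j}.
\]
By $3^{\circ}$ the right‑hand side tends to $0$, hence $\ell_{i_1(x)\dots i_k(x)}\to x$. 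Unwinding the recursion, $\ell_{i_1(x)\dots i_k(x)}=a_{i_1(x),1}+\sum_{l=2}^{k}a_{i_l(x),l}\prod_{j=1}^{l-1}q_{i_j(x),j}$, which is precisely the $k$‑th partial sum of the series in \eqref{def: tilde Q 1}; letting $k\to\infty$ proves the lemma (convergence of the series is automatic, its partial sums being nondecreasing and bounded by $x$).

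\textbf{Main obstacle.} The only step with real content is the tiling claim in Step 1 --- that at each level the children intervals cover the parent with no gaps and no overlaps. This is exactly where the normalization $2^{\circ}$ enters, and the case $m_j=\infty$ needs the extra remark that the countably many half‑open pieces still exhaust the parent because $\sum_{i=0}^{n}q_{i,j}\uparrow 1$. Once the tiling is established, Steps 2--3 are routine, the half‑open convention being precisely what makes each digit $i_k(x)$ well defined.
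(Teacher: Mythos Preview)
The paper does not supply its own proof of this lemma: it is quoted from \cite[p.~88]{Pra98} and stated without argument. Your proof is correct, and the nested half-open cylinder construction you carry out is exactly the standard one; indeed, the paper itself uses precisely this geometric scheme in Section~3 when building the nega-$\tilde Q$-representation (there the children at alternating levels are enumerated left-to-right and right-to-left, but the mechanism---tiling by condition~$2^{\circ}$, shrinking diameters by condition~$3^{\circ}$, and reading off the series from the left endpoints---is identical). Your handling of the case $m_k=\infty$ via $\sum_{i=0}^{n}q_{i,k}\uparrow 1$ and of the digit-uniqueness via the half-open convention is the right way to close the only nontrivial gap.
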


\begin{definition}
Подання числа $x \in [0;1)$ у виглядi розкладу \eqref{def: tilde Q 1} називають~{\cite[с.~89]{Pra98}}  \emph{$\tilde Q$ -представленням (або $\tilde Q$-розкладом) числа $x \in [0;1)$} i позначають $x=\Delta^{\tilde Q} _{i_1(x)i_2(x)...i_{k}(x)...}$. Останнiй запис називають \emph{$\tilde Q$-зображенням числа $x$}.
\end{definition}

$\tilde Q$-представлення дійсних чисел, очевидно, є узагальненням  представлення дійсних чисел знакододатними рядами Кантора \cite{ Symon3, Ralko10, Cantor1}
$$
\frac{\varepsilon_{1}}{d_1}+\frac{\varepsilon_{2}}{d_1d_2}+...+\frac{\varepsilon_{n}}{d_1d_2...d_n}+...,
$$
де $(d_n)$ --- фіксована послідовність натуральних чисел $d_n$, більших $1$, $(A_{d_n})$ --- послідовність алфавітів $A_{d_n} \equiv \{0,1,...,d_n-1\}$, $\varepsilon_{n}\in A_{d_n}$.
Останні ряди, в свою чергу, є узагальненням класичного s-го представлення
$$
\sum^{\infty} _{n=1} {\frac{\alpha_{n}}{s^{n}}}, ~\alpha_n \in A\equiv \{0,1,...,s-1\},
$$
де $1<s$ --- фіксоване натуральне число.

З метою побудови нових об'єктів теорій неперервних ніде недиференційовних функцій,  сингулярних функцій, теорії ймовірностей та фрактального аналізу змоделюємо представлення,  яке є узагальненням  нега-s-го представлення 
$$
\sum^{\infty} _{n=1}{\frac{\alpha_n}{(-s)^n}}\equiv\Delta^{-s} _{\alpha_1\alpha_2...\alpha_n...},~~~\alpha_n\in A,
$$
 та представлення дійсних чисел знакопочережним рядом Кантора (нега-D-представлення)~\cite{Symon-altern.series-2015, Symon4}
$$
\sum^{\infty} _{n=1}{\frac{(-1)^n\varepsilon_n}{d_1d_2...d_n}}\equiv\Delta^{-D} _{\varepsilon_1\varepsilon_2...\varepsilon_n...}, ~\varepsilon_n\in A_{d_n},
$$
 та дослідимо основні властивості як системи числення побудованого представлення.

Отож, нехай маємо слідуючі розклади дійсних чисел
\begin{equation}
\label{def: nega-tilde Q 1}
-a_{i_1,1}+\sum^{\infty} _{n=2}{\left[(-1)^na_{i_n,n}\prod^{n-1} _{j=1} {q_{i_j,j}}\right]}
\end{equation}
та
\begin{equation}
\label{def: nega-tilde Q 2}
\sum^{i_1-1} _{i=0} {q_{i,1}}+\sum^{\infty} _{n=2}{\left[(-1)^{n-1}\tilde \delta_{i_n,n}\prod^{n-1} _{j=1}{\tilde q_{i_j,j}}\right]}+\sum^{\infty} _{n=1}{\left(\prod^{2n-1} _{j=1}{\tilde q_{i_j,j}}\right)}, 
\end{equation}
де
$$
\tilde q_{i_n,n}=\begin{cases}
q_{i_n,n},&\text{якщо $n$  --- непарне;}\\
q_{m_n-i_n,n},&\text{якщо $n$ --- парне,}
\end{cases}
$$
$$
\tilde \delta_{i_n,n}=\begin{cases}
0,&\text{якщо $i_n=0$ та $n$ --- непарне;}\\
\sum^{i_n-1} _{i=0} {q_{i,n}},&\text{якщо $i_{n}\ne 0$ та $n$ --- непарне;}\\
\sum^{m_{n}} _{i=m_{n}-i_{n}} {q_{i,n}},&\text{якщо  $n$ --- парне.}
\end{cases}
$$

\section{Дослідження  ряду \eqref{def: nega-tilde Q 1} як системи числення}

Ряд \eqref{def: nega-tilde Q 1} можна побудувати, використовуючи аналітичний підхід до побудови системи числення. 
Суть аналітичного підходу до побудови  знакопочережного $\tilde Q$-представлення дійсних чисел полягає в тому, що якщо  основою нега-s-го представлення є фіксоване число $(-s)$, де $1<s\in \mathbb N$, та за основу знакопочережного канторівського представлення приймається фіксована послідовність $(-d_n)$, де $1<d_n\in \mathbb N$, то знакопочережне $\tilde Q$-представлення можна побудувати, прийнявши за основу системи числення (представлення дійсних чисел)  фіксовану матрицю $(-1)\cdot \tilde Q=(-1)\cdot ||q_{i,j}||$, $i=\overline{0,m_j}, m_j \in \mathbb N \cup \{0,\infty\}$, j=1,2,..., для елементів $|q_{i_j,j}|$ якої справедливою є система умов \eqref{eq: tilde Q 1}. Отже, розглянемо суму
$$
\sum^{\infty} _{n=1}{\left[\left(\sum^{i_n-1} _{i=0}{(-q_{i,n})}\right)\prod^{n-1} _{j=1}{(-q_{i_j,j})}\right]}=-a_{i_1,1}+\sum^{\infty} _{n=2}{\left[(-1)^na_{i_n,n}\prod^{n-1} _{j=1}{q_{i_j,j}}\right]}.
$$

Очевидно, що останній знакопочережний ряд є абсолютно збіжним, причому його сума належить відрізку $[t^{'} _0;t^{''} _0]$, де
$$
t^{'} _0=-a_{m_1,1}+\sum^{\infty} _{n=2}{\left[(-1)^n\tilde a_{m_n,n}\prod^{n-1} _{j=1}{\tilde q_{m_j,j}}\right]}=-1+\sum^{\infty} _{n=1}{\left((-1)^{n-1}\prod^{n} _{j=1}{\tilde q_{m_j,j}}\right)},
$$
$$
t^{''} _0=\sum^{\infty} _{n=2}{\left[(-1)^n\tilde a_{0,n}\prod^{n-1} _{j=1}{\tilde q_{0,j}}\right]}=\sum^{\infty} _{n=1}{\left((-1)^{n-1}\prod^{n} _{j=1}{\tilde q_{0,j}}\right)},
$$
$$
\tilde a_{i_n,n}=\begin{cases}
a_{i_n,n},&\text{якщо $n$  --- непарне;}\\
a_{m_n-i_n,n},&\text{якщо $n$ --- парне.}
\end{cases}
$$

$$
t^{''} _0-t^{'} _0=a_{m_1,1}+a_{m_2,2}q_{0,1}+a_{m_3,3}q_{m_1,1}q_{0,2}+a_{m_4,4}q_{0,1}q_{m_2,2}q_{0,3}+a_{m_5,5}q_{m_1,1}q_{0,2}q_{m_3,3}q_{0,4}+....
$$

Факт представлення числа $x$ у вигляді розкладу  \eqref{def: nega-tilde Q 1}  позначатимемо $\Delta^{(-\tilde Q)} _{i_1i_2...i_n...}$. 

Очевидно, при $q_{i,j}=\frac{1}{s}$ для всіх $j \in \mathbb N$,  $i=\overline{0,s-1}$, ряд \eqref{def: nega-tilde Q 1} набуває виду нега-s-го представлення
$$
\left[-\frac{s}{s+1};\frac{1}{s+1}\right]\ni x =\Delta^{-s} _{\alpha_1\alpha_2...\alpha_n...}\equiv\sum^{\infty} _{n=1} {\frac{(-1)^{n}\alpha_n}{s^n}},
 $$
а у випадку  $q_{i,j}=\frac{1}{d_j}$ для будь-якого  $j \in \mathbb N$, $i=\overline{0,d_j-1}$, --- знакопочережного ряду Кантора (нега-D-представлення)
$$
\left[-\sum^{\infty}_{k=1}{\frac{d_{2k-1}-1}{d_1d_2...d_{2k-1}}};\sum^{\infty}_{k=1}{\frac{d_{2k}-1}{d_1d_2...d_{2k}}}\right]\ni x=\Delta^{-D} _{\varepsilon_1\varepsilon_2...\varepsilon_n...}\equiv \sum^{\infty} _{n=1}{\frac{(-1)^n\varepsilon_n}{d_1d_2...d_n}}.
$$

Введемо допоміжне для подальшого дослідження представлення дійсних чисел рядом \eqref{def: nega-tilde Q 1} поняття циліндричної множини.
\begin{definition} 
\emph{Циліндричною множиною (або циліндром) $\Delta^{(-\tilde Q)} _{c_1c_2...c_n}$ рангу $n$ з основою $c_1c_2...c_n$} називається множина виду
$$
\Delta^{(-\tilde Q)} _{c_1c_2...c_n}\equiv \left\{x: x=\Delta^{(-\tilde Q)} _{c_1c_2...c_ni_{n+1}i_{n+2}...i_{n+k}...}, x\in[t^{'} _0;t^{''} _0] \right\},
$$
де $c_1,...,c_n$ --- фіксовані числа, $i_{n+k} \in N^0 _{m_{n+k}}, k=1,2,...$.
\end{definition}

Дослідимо, чи можна вважати ряд \eqref{def: nega-tilde Q 1}  системою числення та проведемо короткий порівняльний аналіз даного розкладу дійсних чисел і згаданих вище знакопочережних представлень. 

\begin{enumerate}
\item \emph{Основне метричне відношення}.
В нега-s-му, нега-канторівському представленнях для відповідних циліндрів $\Delta^{-s} _{c_1c_2...c_n}$, $\Delta^{-D} _{c_1c_2...c_n}$  рангу $n$ з основою $c_1c_2...c_n$ справедливими є наступні співвідношення:
\begin{itemize}
\item для довільного $n \in \mathbb N$
$$
|\Delta^{-s} _{c_1c_2...c_n}|=\frac{1}{s^n}\hat \varphi^n (\Delta^s _{(s-1)})=\frac{1}{s^n}\hat \varphi^n\left(\left|\sup_x{\Delta^{-s} _{\alpha_1\alpha_2...\alpha_n...}}-\inf_x{\Delta^{-s} _{\alpha_1\alpha_2...\alpha_n...}}\right|\right)=
$$
$$
=\frac{1}{s^n}\left|\hat \varphi^n\left(\sup_x{\Delta^{-s} _{\alpha_1\alpha_2...\alpha_n...}}\right)-\hat \varphi^n\left(\inf_x{\Delta^{-s} _{\alpha_1\alpha_2...\alpha_n...}}\right)\right|=\frac{1}{s^n}\left|\hat \varphi^n\left({\Delta^{-s} _{(0[s-1])}}\right)-\hat \varphi^n\left({\Delta^{-s} _{([s-1]0)}}\right)\right|,
$$
де $\hat \varphi$ --- оператор зсуву цифр.

Таким чином, 
$$
\left|\hat \varphi^n\left(\sup_x{\Delta^{-s} _{\alpha_1\alpha_2...\alpha_n...}}\right)-\hat \varphi^n\left(\inf_x{\Delta^{-s} _{\alpha_1\alpha_2...\alpha_n...}}\right)\right|=\hat \varphi^n\left(\sup_x{\Delta^{-s} _{\alpha_1\alpha_2...\alpha_n...}}-\inf_x{\Delta^{-s} _{\alpha_1\alpha_2...\alpha_n...}}\right)=const=
$$
$$
=\sup_x{\Delta^{-s} _{\alpha_1\alpha_2...\alpha_n...}}-\inf_x{\Delta^{-s} _{\alpha_1\alpha_2...\alpha_n...}}.
$$

\item для кожного $n \in \mathbb N$
$$
|\Delta^{-D} _{c_1c_2...c_n}|=\frac{1}{d_1d_2...d_n}\hat \varphi^n (\Delta^D _{[d_1-1][d_2-1]...[d_n-1]...})=\frac{1}{d_1d_2...d_n}\hat \varphi^n\left(\left|\sup_x{\Delta^{-D} _{\varepsilon_1\varepsilon_2...\varepsilon_n...}}-\inf_x{\Delta^{-D} _{\varepsilon_1\varepsilon_2...\varepsilon_n...}}\right|\right)=
$$
$$
=\frac{1}{d_1d_2...d_n}\left|\hat \varphi^n\left(\sup_x{\Delta^{-D} _{\varepsilon_1\varepsilon_2...\varepsilon_n...}}\right)-\hat \varphi^n\left(\inf_x{\Delta^{-D} _{\varepsilon_1\varepsilon_2...\varepsilon_n...}}\right)\right|=
$$
$$
=\frac{1}{d_1d_2...d_n}\left|\hat \varphi^n\left({\Delta^{-D} _{0[d_2-1]0[d_4-1]...0[d_{2k}-1]...}}\right)-\hat \varphi^n\left({\Delta^{-D} _{[d_1-1]0[d_3-1]0...[d_{2k-1}-1]0...}}\right)\right|,
$$
де $k=1,2,...$, $\hat \varphi$ --- оператор зсуву цифр відповідного представлення.

Отже, 
$$
\hat \varphi^n\left(\sup_x{\Delta^{-D} _{\varepsilon_1\varepsilon_2...\varepsilon_n...}}-\inf_x{\Delta^{-D} _{\varepsilon_1\varepsilon_2...\varepsilon_n...}}\right)=\left|\hat \varphi^n\left(\sup_x{\Delta^{-D} _{\varepsilon_1\varepsilon_2...\varepsilon_n...}}\right)-\hat \varphi^n\left(\inf_x{\Delta^{-D} _{\varepsilon_1\varepsilon_2...\varepsilon_n...}}\right)\right|=const=
$$
$$
=\sup_x{\Delta^{-D} _{\varepsilon_1\varepsilon_2...\varepsilon_n...}}-\inf_x{\Delta^{-D} _{\varepsilon_1\varepsilon_2...\varepsilon_n...}}.
$$
\end{itemize}

Розглянемо розклад \eqref{def: nega-tilde Q 1}. 

Позначатимемо символом $d(\cdot)$ діаметр множини. Розглянемо випадок, коли $n=1$. В такому разі
$$
d\left(\Delta^{(-\tilde Q)} _{c_1}\right)=\Delta^{(-\tilde Q)} _{c_1m_20m_4...0m_{2k}...}-\Delta^{(-\tilde Q)} _{c_10m_30m_5...0m_{2k-1}...}=
$$
$$
=q_{c_1,1}\left(a_{m_2,2}+\sum^{\infty} _{k=3}{\left[\tilde a_{0,k}\prod^{k-1} _{j=2}{\tilde q_{0,j}}\right]}+\sum^{\infty} _{k=3}{\left[\tilde a_{m_{k},k}\prod^{k-1} _{j=2}{\tilde q_{m_j,j}}\right]}\right)=
$$
$$
= q_{c_1,1}\left(\frac{t^{''} _0}{q_{0,1}}-\frac{t^{'} _0+a_{m_1,1}}{q_{m_1,1}}\right)=\frac{q_{m_1,1}t^{''} _0-q_{0,1}t^{'} _0-a_{m_1,1}q_{0,1}}{q_{0,1}q_{m_1,1}}q_{c_1,1}=q_{c_1,1}(t^{''} _0-t^{'} _0),
$$
якщо справедливою є умова
$$
t^{''} _0=\frac{q_{0,1}(1-q_{m_1,1})}{q_{m_1,1}(1-q_{0,1})}(1+t^{'} _0).
$$

Очевидно, остання умова справджується не у всіх випадках залежно від матриці $\tilde Q$. Наприклад, розглянемо випадок, коли для всіх натуральних значень $n$ справедливо $m_n<\infty$ та для всіх $n>1$ $q_{0,n}=const=q_0$, $q_{m_n}=const=q_m$, $q_0\ne q_m$,
$q_{0,1}=q_1$, $q_{m_1,1}=q_2$, $q_1\ne q_2$. Тоді
$$
t^{'} _0=\Delta^{(-\tilde Q)} _{m_10m_30m_5...}\equiv-\left(1-q_2+q_2\sum^{\infty} _{k=1}{\left((1-q_m)q^k _0 q^{k-1} _m\right)}\right)=q_2\left(1-\frac{(1-q_m)q_0}{1-q_0q_m}\right)-1,
$$

$$
t^{''} _0=\Delta^{(-\tilde Q)} _{0m_20m_40m_6...}\equiv q_1\left(\sum^{\infty} _{k=1}{(1-q_m)q^{k-1} _0q^{k-1} _m}\right)=\frac{1-q_m}{1-q_0q_m}q_1.
$$

Отож, 
$$
\frac{q_{0,1}(1-q_{m_1,1})}{q_{m_1,1}(1-q_{0,1})}(1+t^{'} _0)=\frac{q_1(1-q_2)}{q_2(1-q_1)}q_2\left(1-\frac{(1-q_m)q_0}{1-q_0q_m}\right)=\frac{q_1(1-q_2)(1-q_0)}{(1-q_1)(1-q_0q_m)}=t^{''} _0,
$$
якщо
$$
\frac{1-q_m}{1-q_0}=\frac{1-q_2}{1-q_1}.
$$
Цілком очевидно, що останнє співвідношення  справедливе не завжди.

Тобто, метричне відношення 
$$
\frac{d\left(\Delta^{(-\tilde Q)} _{c_1c_2...c_kc}\right)}{d\left(\Delta^{(-\tilde Q)} _{c_1c_2...c_k}\right)},
$$
де $k=0,1,2,3,...,$ при $k=0$   циліндричною множиною вважатимемо відрізок $[t^{'} _0;t^{''} _0]$, залежно від матриці $\tilde Q$ не завжди дорівнює $q_{c,k+1}$. В загальному випадку
$$
\frac{d\left(\Delta^{(-\tilde Q)} _{c_1c_2...c_kc}\right)}{d\left(\Delta^{(-\tilde Q)} _{c_1c_2...c_k}\right)}=q_{c,k+1}\frac{a_{m_{k+2},k+2}+\sum^{\infty} _{t=k+3}{\left[\tilde a_{m_{t},t}\prod^{t-1} _{r=k+2}{\tilde q_{m_r,r}}\right]}+\sum^{\infty} _{t=k+3}{\left[\tilde a_{0,t}\prod^{t-1} _{r=k+2}{\tilde q_{0,r}}\right]}}{a_{m_{k+1},k+1}+\sum^{\infty} _{t=k+2}{\left[\tilde a_{m_{t},t}\prod^{t-1} _{r=k+1}{\tilde q_{m_r,r}}\right]}+\sum^{\infty} _{t=k+2}{\left[\tilde a_{0,t}\prod^{t-1} _{r=k+1}{\tilde q_{0,r}}\right]}}.
$$

\item \emph{Представлення чисел, що мають два різних зображення}. Виявляється \cite{Symon-altern.series-2015}, лише числа із зліченної множини  можуть мати два різних  нега-D-зображення. Аналогічна властивість справедлива і для нега-s-го зображення. Зокрема,
$$
\Delta^{-s} _{\alpha_1\alpha_2...\alpha_{n-1}\alpha_n([s-1]0)}=\Delta^{-s} _{\alpha_1\alpha_2...\alpha_{n-1}[\alpha_n-1](0[s-1])},~\alpha_n\ne0,
 $$ 
 $$
\Delta^{-D} _{\varepsilon_1\varepsilon_2...\varepsilon_{n-1}\varepsilon_n[d_{n+1}-1]0[d_{n+3}-1]0[d_{n+5}-1]...}=\Delta^{-D} _{\varepsilon_1\varepsilon_2...\varepsilon_{n-1}[\varepsilon_n-1]0[d_{n+2}-1]0[d_{n+4}-1]0[d_{n+6}-1]...},~\varepsilon_n\ne0.
$$
 Для ряду  \eqref{def: nega-tilde Q 1} справедливим є наступне твердження.
 \begin{lemma} Якщо
 $$
 \Delta^{(-\tilde Q)} _{c_1c_2...c_{n-1}c_nm_{n+1}0m_{n+3}0m_{n+5}...}=\Delta^{(-\tilde Q)} _{c_1c_2...c_{n-1}[c_n-1]0m_{n+2}0m_{n+4}0m_{n+6}...},~c_n\ne0,
 $$
 тоді  при парному $n$  справедливою є наступна рівність
$$
\frac{t^{''} _0-\sum^{n-2} _{k=2}{\left[\tilde a_{0,k}\prod^{k-1} _{j=1}{\tilde q_{0,j}}\right]}-\prod^{n-1} _{j=1}{\tilde q_{0,j}}}{t^{'} _0+a_{m_1,1}+\sum^{n-1} _{k=2}{\left[\tilde a_{m_k,k}\prod^{k-1} _{j=1}{\tilde q_{m_j,j}}\right]}}=\frac{q_{c_n,n}\prod^{n} _{j=1}{\tilde q_{0,j}}}{q_{c_n-1,n}\prod^{n} _{j=1}{\tilde q_{m_j,j}}}
$$
  та 
$$
\frac{t^{''} _0-\sum^{n-1} _{k=2}{\left[\tilde a_{0,k}\prod^{k-1} _{j=1}{\tilde q_{0,j}}\right]}}{t^{'} _0+a_{m_1,1}+\sum^{n-2} _{k=2}{\left[\tilde a_{m_k,k}\prod^{k-1} _{j=1}{\tilde q_{m_j,j}}\right]}+\prod^{n-1} _{j=1}{\tilde q_{m_j,j}}}=\frac{q_{c_n-1,n}\prod^{n} _{j=1}{\tilde q_{0,j}}}{q_{c_n,n}\prod^{n} _{j=1}{\tilde q_{m_j,j}}}
$$
 при непарному $n$.
 \end{lemma}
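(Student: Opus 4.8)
The plan is to compute both sides of the asserted equality directly from the defining formula \eqref{def: nega-tilde Q 1}, cancel the part common to the two symbolic sequences, and reduce the statement to a single scalar equation; I would then verify that, after an elementary rearrangement, this equation is exactly what the displayed identity asserts, in each of the two parity cases.

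\textbf{Stripping the common prefix.} The two sequences $c_1\dots c_{n-1}c_nm_{n+1}0m_{n+3}0\dots$ and $c_1\dots c_{n-1}[c_n-1]0m_{n+2}0m_{n+4}\dots$ agree in positions $1,\dots,n-1$, so in \eqref{def: nega-tilde Q 1} the corresponding partial sums cancel in the difference of the two values. Since $\prod_{j=1}^{n-1}q_{c_j,j}\neq 0$ by $1^{\circ}$ of \eqref{eq: tilde Q 1}, I may divide through by $(-1)^n\prod_{j=1}^{n-1}q_{c_j,j}$; using the identity $a_{c_n,n}-a_{c_n-1,n}=q_{c_n-1,n}$ (here $c_n\neq 0$ is what makes $c_n-1$ an admissible digit), the equality of the two values becomes
\[
q_{c_n-1,n}+q_{c_n,n}\Sigma_L=q_{c_n-1,n}\Sigma_R,
\]
where $\Sigma_L$ and $\Sigma_R$ come from the parts of \eqref{def: nega-tilde Q 1} indexed by positions $\ge n+1$ for the left and the right sequence, after pulling out the factors $q_{c_n,n}$ and $q_{c_n-1,n}$. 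Because $a_{0,k}=0$, only positions carrying a digit $m_k$ survive, so
\[
\Sigma_L=-\sum_{\ell\ge 0}a_{m_{n+2\ell+1},\,n+2\ell+1}\prod_{r=0}^{\ell-1}\left(q_{m_{n+2r+1},\,n+2r+1}\,q_{0,\,n+2r+2}\right),\qquad \Sigma_R=\sum_{\ell\ge 1}a_{m_{n+2\ell},\,n+2\ell}\,q_{0,\,n+1}\prod_{r=1}^{\ell-1}\left(q_{m_{n+2r},\,n+2r}\,q_{0,\,n+2r+1}\right).
\]
Consequently the coincidence of the two representations is equivalent to the single relation $q_{c_n,n}\Sigma_L=q_{c_n-1,n}(\Sigma_R-1)$.

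\textbf{Matching the displayed identity.} The second step is to recognise the numerator and the denominator of the displayed identity as tail remainders of the series defining $t''_0$ and $t'_0$. Using the two expressions for $t'_0$ and $t''_0$ written just before the lemma, together with the relations $\tilde a_{m_k,k}=1-\tilde q_{m_k,k}$ for odd $k$, $\tilde a_{0,k}=1-\tilde q_{0,k}$ for even $k$ (and $\tilde a_{m_k,k}=\tilde a_{0,k}=0$ in the complementary cases), a telescoping computation shows that for even $n$ the numerator equals $\left(\prod_{j=1}^{n}\tilde q_{0,j}\right)(\Sigma_R-1)$ and the denominator equals $\left(\prod_{j=1}^{n}\tilde q_{m_j,j}\right)\Sigma_L$, whereas for odd $n$ the numerator equals $-\left(\prod_{j=1}^{n}\tilde q_{0,j}\right)\Sigma_L$ and the denominator equals $\left(\prod_{j=1}^{n}\tilde q_{m_j,j}\right)(1-\Sigma_R)$. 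Substituting these into the displayed identity and cancelling $\prod_{j=1}^{n}\tilde q_{0,j}$ from the numerators and $\prod_{j=1}^{n}\tilde q_{m_j,j}$ from the denominators, the identity collapses in both cases to $q_{c_n,n}\Sigma_L=q_{c_n-1,n}(\Sigma_R-1)$, which by the first step is equivalent to the coincidence of the two representations. This also explains why the even-$n$ and the odd-$n$ forms of the identity look different although they are both reformulations of one and the same scalar relation.

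\textbf{Where the work is.} The only real difficulty is the index-and-sign bookkeeping in the second step: one must carefully keep apart the parity of the \emph{absolute} position index $j$ --- which decides whether $\tilde q_{\cdot,j}$ and $\tilde a_{\cdot,j}$ stand for $q_{\cdot,j}$ or for $q_{m_j-\cdot,j}$, and whether $\tilde a_{\cdot,j}$ vanishes --- from the parity of the \emph{offset} $k-n$, which decides whether the tail carries the digit $m_k$ or the digit $0$ at position $k$; and one must deal separately with the short cases $n=2$ and $n=3$, in which some of the finite sums $\sum_{k=2}^{n-2}$ and $\sum_{k=2}^{n-1}$ are empty. Apart from that, everything is a formal rearrangement of absolutely convergent series that uses only the normalisation $\sum_{i=0}^{m_j}q_{i,j}=1$ from \eqref{eq: tilde Q 1}.
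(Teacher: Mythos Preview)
Your argument is correct and is essentially the same as the paper's: both compute the two values from \eqref{def: nega-tilde Q 1}, cancel the common prefix $c_1\dots c_{n-1}$, and rewrite the surviving tails as tail remainders of the series defining $t'_0$ and $t''_0$, treating the two parities of $n$ separately. The only difference is organisational: the paper carries out the two steps simultaneously (it writes $x_1$ and $x_2$ directly with the tails already expressed through $t'_0+a_{m_1,1}+\sum\dots$ and $t''_0-\sum\dots$, then sets $x_1=x_2$), whereas you first isolate the raw scalar relation $q_{c_n,n}\Sigma_L=q_{c_n-1,n}(\Sigma_R-1)$ and only afterwards match $\Sigma_L,\Sigma_R$ with the numerator and denominator of the displayed identity.
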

 \begin{proof} Нехай  
 $$
 x_1=\Delta^{(-\tilde Q)} _{c_1c_2...c_{n-1}c_nm_{n+1}0m_{n+3}0m_{n+5}...},~ \Delta^{(-\tilde Q)} _{c_1c_2...c_{n-1}[c_n-1]0m_{n+2}0m_{n+4}0m_{n+6}...}=x_2,~c_n\ne0.
 $$
 Якщо $n$ --- парне  число, тоді
 $$
 x_1=-a_{c_1,1}+\sum^{n-1} _{k=2}{\left[(-1)^ka_{c_k,k}\prod^{k-1} _{j=1}{q_{c_j,j}}\right]}+(-1)^n\left(\prod^{n-1} _{j=1}{q_{c_j,j}}\right)\times
 $$
 $$
 \times\left(a_{c_n,n}-q_{c_n,n}\left(a_{m_{n+1},n+1}+\sum^{\infty} _{k=2}{\left[ a_{m_{n+2k-1},n+2k-1}\prod^{n+2k-2} _{j=n+1}{\tilde q_{m_{j},j}}\right]}\right)\right)=
 $$
 $$
 =-a_{c_1,1}+\sum^{n-1} _{k=2}{\left[(-1)^ka_{c_k,k}\prod^{k-1} _{j=1}{q_{c_j,j}}\right]}+\left(\prod^{n-1} _{j=1}{q_{c_j,j}}\right)\times
 $$
 $$
 \times \left(a_{c_n,n}+q_{c_n,n}\left({t^{'} _0+a_{m_1,1}+\sum^{n-1} _{k=2}{\left[\tilde a_{m_{k},k}\prod^{k-1} _{j=n+1}{\tilde q_{m_{j},j}}\right]}}\right)\prod^{n} _{j=1}{(\tilde q_{m_{j},j})^{-1}}\right). 
 $$
 $$
 x_2=-a_{c_1,1}+\sum^{n-1} _{k=2}{\left[(-1)^ka_{c_k,k}\prod^{k-1} _{j=1}{q_{c_j,j}}\right]}+(-1)^n\left(\prod^{n-1} _{j=1}{q_{c_j,j}}\right)\times
 $$
 $$
 \times\left(a_{c_n-1,n}+q_{c_n-1,n}\sum^{\infty} _{k=1}{\left[ a_{m_{n+2k},n+2k}\prod^{n+2k-1} _{j=n+1}{\tilde q_{0,j}}\right]}\right)=
$$
 $$
 =-a_{c_1,1}+\sum^{n-1} _{k=2}{\left[(-1)^ka_{c_k,k}\prod^{k-1} _{j=1}{q_{c_j,j}}\right]}+\left(\prod^{n-1} _{j=1}{q_{c_j,j}}\right)\times
 $$
 $$
 \times\left(a_{c_n-1,n}+q_{c_n-1,n}\left(t^{''} _0-\sum^{n} _{k=2}{\left[\tilde a_{0,k}\prod^{k-1} _{j=n+1}{\tilde q_{0,j}}\right]}\right)\prod^{n} _{j=1}{(\tilde q_{0,j})^{-1}}\right). 
 $$
 З умови $x_1=x_2$  й слідує перша рівність в умові леми. 
 
 Нехай $n$ --- непарне  число. Тоді при умові, що  $x_1=x_2$ отримаємо
 
$$
-q_{c_n-1,n}+q_{c_n,n}\left(t^{''} _0-\sum^{n-1} _{k=2}{\left[\tilde a_{0,k}\prod^{k-1} _{j=1}{\tilde q_{0,j}}\right]}\right)\prod^{n} _{j=1}{(\tilde q_{0,j})^{-1}}=
$$
$$
=q_{c_n-1,n}\left(t^{'} _0+a_{m_{1},1}+\sum^{n} _{k=2}{\left[\tilde a_{m_k,k}\prod^{k-1} _{j=1}{\tilde q_{m_j,j}}\right]}\right)\prod^{n} _{j=1}{(\tilde q_{m_j,j})^{-1}},
$$
 звідки й слідує друга рівність в умові леми.
  \end{proof}
 
 Додатково розглянемо  ще деякі властивості циліндрів $\Delta^{(-\tilde Q)} _{c_1c_2...c_n}$ рангу $n$ з основою $c_1c_2...c_n$. 
 
 \begin{lemma}
 Циліндр $\Delta^{(-\tilde Q)} _{c_1c_2...c_n}$ є відрізком.
 \end{lemma}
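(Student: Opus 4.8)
The plan is to realize the cylinder as an affine image of the whole range of a shifted copy of the representation \eqref{def: nega-tilde Q 1}, and then to use the fact --- already obtained when \eqref{def: nega-tilde Q 1} was modelled at the beginning of this section --- that such a range is a closed segment.

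First I would fix the name $c_1\dots c_n$ and split an arbitrary point $x=\Delta^{(-\tilde Q)}_{c_1\dots c_n i_{n+1}i_{n+2}\dots}$ of the cylinder into a constant head and a tail. Using $\prod_{j=1}^{k-1}q_{i_j,j}=\bigl(\prod_{j=1}^{n}q_{c_j,j}\bigr)\prod_{j=n+1}^{k-1}q_{i_j,j}$ for $k\ge n+1$ and putting $l=k-n$ in the remaining summands of \eqref{def: nega-tilde Q 1}, one gets
$$
x=S_n+(-1)^n\Bigl(\prod_{j=1}^{n}q_{c_j,j}\Bigr)T,\qquad S_n:=-a_{c_1,1}+\sum_{k=2}^{n}(-1)^k a_{c_k,k}\prod_{j=1}^{k-1}q_{c_j,j},
$$
where $S_n$ and $\beta_n:=\prod_{j=1}^{n}q_{c_j,j}$ depend only on $c_1,\dots,c_n$, and $T=\Delta^{(-\tilde Q^{(n)})}_{i_{n+1}i_{n+2}\dots}$ is a representation of exactly the shape \eqref{def: nega-tilde Q 1} built from the shifted matrix $\tilde Q^{(n)}:=\|q_{i,j}\|$, $i=\overline{0,m_j}$, $j\ge n+1$ (the columns renumbered $1,2,\dots$).

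Next I would observe that $\tilde Q^{(n)}$ again satisfies $1^{\circ}$--$3^{\circ}$: conditions $1^{\circ},2^{\circ}$ are columnwise and are inherited literally, while $3^{\circ}$ for $\tilde Q^{(n)}$ follows from $3^{\circ}$ for $\tilde Q$ because $\beta_n\prod_{j=1}^{\infty}q_{i_{n+j},n+j}=\prod_{j=1}^{\infty}q_{i_j,j}=0$ and $\beta_n>0$. Hence the modelling of \eqref{def: nega-tilde Q 1} carried out at the start of this section applies verbatim to $\tilde Q^{(n)}$: as $(i_{n+k})$ ranges over all admissible sequences, $T$ ranges over precisely the closed segment $[t'_n;t''_n]$ --- the analogue of $[t'_0;t''_0]$ for $\tilde Q^{(n)}$, with $t'_n=\Delta^{(-\tilde Q^{(n)})}_{m_{n+1}0m_{n+3}0\dots}$, $t''_n=\Delta^{(-\tilde Q^{(n)})}_{0m_{n+2}0m_{n+4}0\dots}$, and $t'_n<t''_n$ since $t''_n-t'_n$ is a sum of positive terms.

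Finally, since $\beta_n>0$ the map $\psi(t)=S_n+(-1)^n\beta_n t$ is an affine homeomorphism of $\mathbb R$ (increasing for even $n$, decreasing for odd $n$), and the displayed identity gives $\Delta^{(-\tilde Q)}_{c_1c_2\dots c_n}=\psi\bigl([t'_n;t''_n]\bigr)$, which is therefore a closed segment, with endpoints $\Delta^{(-\tilde Q)}_{c_1\dots c_nm_{n+1}0m_{n+3}0\dots}$ and $\Delta^{(-\tilde Q)}_{c_1\dots c_n0m_{n+2}0m_{n+4}0\dots}$ (ordered according to the parity of $n$) and of length $\beta_n(t''_n-t'_n)$, which for $n=1$ reproduces the value of $d\bigl(\Delta^{(-\tilde Q)}_{c_1}\bigr)$ computed above. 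The main obstacle will be the surjectivity buried in the previous step --- that \emph{every} point of $[t'_n;t''_n]$ is attained as a value of $T$ (the reverse inclusion being immediate from term-by-term estimates); because some $m_k$ may be infinite the digit space is in general non-compact, so this has to be extracted from the constructive algorithm behind Lemma~1 (adapted to the matrix $(-1)\cdot\tilde Q^{(n)}$) rather than from a compactness argument, and everything else is routine manipulation of the defining series.
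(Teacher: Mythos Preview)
Your decomposition $x=S_n+(-1)^n\beta_nT$ is exactly what the paper does (it treats even $n$ explicitly; odd $n$ is symmetric). The paper's argument is in fact shorter than yours: having written $x$ in this form, it records the two explicit endpoints $x',x''$ obtained from the tails $m_{n+1}0m_{n+3}0\ldots$ and $0m_{n+2}0m_{n+4}0\ldots$, checks $x'\le x\le x''$, notes that $x',x''$ themselves belong to the cylinder because the infimum and supremum of the tail are attained, and stops. No argument for the reverse inclusion $[x';x'']\subseteq\Delta^{(-\tilde Q)}_{c_1\dots c_n}$ is given.

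The surjectivity you single out as ``the main obstacle'' is thus precisely what separates your write-up from the paper's, and your caution is well placed --- but the fix you propose is not actually available. The passage at the start of this section only shows that the values of \eqref{def: nega-tilde Q 1} lie \emph{in} $[t'_0;t''_0]$, not that every point of that segment is attained; the latter is the content of Theorem~1, proved only \emph{after} the present lemma and only under additional inequalities on $\tilde Q$. The discussion of $\kappa_1,\kappa_2$ immediately following the lemma in fact exhibits matrices $\tilde Q$ for which adjacent sub-cylinders are separated by genuine gaps, so without the hypotheses of Theorem~1 the cylinder need not coincide with $[x';x'']$. Adapting Lemma~1 to $(-1)\cdot\tilde Q^{(n)}$ does not sidestep this: Lemma~1 relies on the monotonicity of the partial sums $a_{i,k}$ in $i$, which is lost once the signs alternate --- recovering a greedy algorithm in that setting is exactly what Theorem~1 is about. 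In short, the paper's proof really establishes only that the cylinder sits inside a segment and contains its two extreme points; your version is more explicit about the structure and more honest about what remains, but the gap you identify is genuine and is not closed by anything already proved in the paper.
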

 \begin{proof}
 Доведення проведемо для парного $n$. Нехай $x\in \Delta^{(-\tilde Q)} _{c_1c_2...c_n}$. Тобто, 
 $$
 x=-a_{c_1,1}+\sum^{n} _{k=2}{\left[(-1)^ka_{c_k,k}\prod^{k-1} _{j=1}{q_{c_j,j}}\right]}+\left(\prod^{n} _{j=1}{q_{c_j,j}}\right)\left(-a_{i_{n+1},n+1}+\sum^{\infty} _{l=n+2}{\left[(-1)^la_{i_l,l}\prod^{l-1} _{r=n+1}{q_{i_r,r}}\right]}\right).
 $$
 Звідси,
 $$
 x^{'}=-a_{c_1,1}+\sum^{n} _{k=2}{\left[(-1)^ka_{c_k,k}\prod^{k-1} _{j=1}{q_{c_j,j}}\right]}-\left(\prod^{n} _{j=1}{q_{c_j,j}}\right)\left(a_{m_{n+1},n+1}+\sum^{\infty} _{t=2}{\left[\tilde a_{m_{n+t},n+t}\prod^{n+t-1} _{r=n+1}{\tilde q_{m_r,r}}\right]}\right)\le
 $$
 $$
 \le x\le -a_{c_1,1}+\sum^{n} _{k=2}{\left[(-1)^ka_{c_k,k}\prod^{k-1} _{j=1}{q_{c_j,j}}\right]}+\left(\prod^{n} _{j=1}{q_{c_j,j}}\right)\sum^{\infty} _{t=2}{\left[\tilde a_{0,n+t}\prod^{n+t-1} _{r=n+1}{\tilde q_{0,r}}\right]}=x^{''}. 
 $$
 
 Отже, $x\in [x^{'};x^{''}]\supseteq \Delta^{(-\tilde Q)} _{c_1c_2...c_n}$.
 
 Оскільки
 $$
 x^{'}=-a_{c_1,1}+\sum^{n} _{k=2}{\left[(-1)^ka_{c_k,k}\prod^{k-1} _{j=1}{q_{c_j,j}}\right]}+\left(\prod^{n} _{j=1}{q_{c_j,j}}\right)\inf{\left\{-a_{i_{n+1},n+1}+\sum^{\infty} _{l=n+2}{\left[(-1)^la_{i_l,l}\prod^{l-1} _{r=n+1}{q_{i_r,r}}\right]}\right\}},
 $$
 $$
 x^{''}=-a_{c_1,1}+\sum^{n} _{k=2}{\left[(-1)^ka_{c_k,k}\prod^{k-1} _{j=1}{q_{c_j,j}}\right]}+\left(\prod^{n} _{j=1}{q_{c_j,j}}\right)\sup{\left\{-a_{i_{n+1},n+1}+\sum^{\infty} _{l=n+2}{\left[(-1)^la_{i_l,l}\prod^{l-1} _{r=n+1}{q_{i_r,r}}\right]}\right\}},
 $$
 то $x^{'},x^{''},x$ належать  $\Delta^{(-\tilde Q)} _{c_1c_2...c_n}$.
 \end{proof}
 
 \item \emph{Розташування циліндрів однакового рангу.} Оскільки у згаданих вище нега-s-му та нега-D-представленні циліндричні множини є відрізками, що розташовані ''зліва направо'' при парному $n$ та~''~справа наліво'',~якщо $n$ --- непарне, то ''щось подібне'' мало б  справджуватися і для рядів \eqref{def: nega-tilde Q 1}.
 
 Розглянемо необхідні для подальшого дослідження розташування циліндрів $\Delta^{(-\tilde Q)} _{c_1c_2...c_n}$  співвідношення. 
 
 Нехай $n$ --- деяке фіксоване натуральне число. Якщо циліндри $\Delta^{(-\tilde Q)} _{c_1c_2...c_{n-1}c}$, $\Delta^{(-\tilde Q)} _{c_1c_2...c_{n-1}[c+1]}$ \emph{перекриваються} і розташовані: 
 \begin{itemize}
 \item \emph{ ''зліва направо''}, то
 $$
\kappa_1=\sup {\Delta^{(-\tilde Q)} _{c_1c_2...c_{n-1}c}}-\inf {\Delta^{(-\tilde Q)} _{c_1c_2...c_{n-1}[c+1]}}> 0;
 $$
 \item \emph{ ''справа наліво''}, то
 $$
\kappa_2=\sup{\Delta^{(-\tilde Q)} _{c_1c_2...c_{n-1}[c+1]}} -\inf {\Delta^{(-\tilde Q)} _{c_1c_2...c_{n-1}c}}> 0.
 $$
 \end{itemize}
 
 Причому, у першому випадку
 $$
 \kappa_1<\kappa_2=|\Delta^{(-\tilde Q)} _{c_1c_2...c_{n-1}c}|+|\Delta^{(-\tilde Q)} _{c_1c_2...c_{n-1}[c+1]}|-|\Delta^{(-\tilde Q)} _{c_1c_2...c_{n-1}c}\cap\Delta^{(-\tilde Q)} _{c_1c_2...c_{n-1}[c+1]}|=W.
 $$
 
 У другому ж випадку $\kappa_2<\kappa_1=W$.
 
Якщо ж циліндри $\Delta^{(-\tilde Q)} _{c_1c_2...c_{n-1}c}$, $\Delta^{(-\tilde Q)} _{c_1c_2...c_{n-1}[c+1]}$ \emph{не  перекриваються} і розташовані: 
 \begin{itemize}
 \item \emph{ ''зліва направо''}, то
 $$
\nu_1=\inf {\Delta^{(-\tilde Q)} _{c_1c_2...c_{n-1}[c+1]}}-\sup {\Delta^{(-\tilde Q)} _{c_1c_2...c_{n-1}c}}=-\kappa_1> 0;
 $$
 \item \emph{ ''справа наліво''}, то
 $$
\nu_2=\inf{\Delta^{(-\tilde Q)} _{c_1c_2...c_{n-1}c}} -\sup {\Delta^{(-\tilde Q)} _{c_1c_2...c_{n-1}[c+1]}}=-\kappa_2>0.
 $$
 \end{itemize}
 
 Проте, в такому разі у першому випадку 
 $$
\nu_1>\nu_2= V=-|\Delta^{(-\tilde Q)} _{c_1c_2...c_{n-1}c}|-|\Delta^{(-\tilde Q)} _{c_1c_2...c_{n-1}[c+1]}|-\varpi,
 $$
 де $\varpi$ --- міра Лебега спільного суміжного з цилідрами $\Delta^{(-\tilde Q)} _{c_1c_2...c_{n-1}c}$, $\Delta^{(-\tilde Q)} _{c_1c_2...c_{n-1}[c+1]}$ інтервала. У другому випадку $V=\nu_1<\nu_2$.

 Перевіримо, які ж із наведених вище співвідношень є справедливими. Отже, нехай $n$ --- парне. Розглянемо різницю 
 $$
\kappa_1\equiv\sup{\Delta^{(-\tilde Q)} _{c_1c_2...c_{n-1}c}}-\inf{\Delta^{(-\tilde Q)} _{c_1c_2...c_{n-1}[c+1]}}=
 $$
 $$
=a_{c,n}\prod^{n-1} _{j=1}{q_{c_j,j}}+q_{c,n}\left(\prod^{n-1} _{j=1}{q_{c_j,j}}\right)\left(a_{0,n+1}+\sum^{\infty} _{t=n+2}{\left[\tilde a_{0,t}\prod^{t-1} _{r=n+1}{\tilde q_{0,r}}\right]}\right)-
 $$
 $$
-a_{c+1,n}\prod^{n-1} _{j=1}{q_{c_j,j}}+q_{c+1,n}\left(\prod^{n-1} _{j=1}{q_{c_j,j}}\right)\left(a_{m_{n+1},n+1}+\sum^{\infty} _{t=n+2}{\left[\tilde a_{m_{t},t}\prod^{t-1} _{r=n+1}{\tilde q_{m_r,r}}\right]}\right)=\left(\prod^{n-1} _{j=1}{q_{c_j,j}}\right)\times
$$
 $$
\times\left(q_{c+1,n}\left(a_{m_{n+1},n+1}+\sum^{\infty} _{t=n+2}{\left[\tilde a_{m_{t},t}\prod^{t-1} _{r=n+1}{\tilde q_{m_r,r}}\right]}\right)+q_{c,n}\left(-1+\sum^{\infty} _{t=n+2}{\left[\tilde a_{0,t}\prod^{t-1} _{r=n+1}{\tilde q_{0,r}}\right]}\right)\right).
$$
 
 Позначивши
 $$
 \omega_1=a_{m_{n+1},n+1}+\sum^{\infty} _{t=n+2}{\left[\tilde a_{m_{t},t}\prod^{t-1} _{r=n+1}{\tilde q_{m_r,r}}\right]},
 $$
 $$
 \omega_2=\sum^{\infty} _{t=n+2}{\left[\tilde a_{0,t}\prod^{t-1} _{r=n+1}{\tilde q_{0,r}}\right]},
 $$
 отримаємо
 $$
 \kappa_1=(q_{c+1,n}\omega_1-q_{c,n}+q_{c,n}\omega_2)q_{c_1,1}q_{c_2,2}...q_{c_{n-1},n-1}.
 $$
 
 Таким чином,  справедливими  є подвійна нерівність
$$
 -q_{c,n}<\frac{\kappa_1}{q_{c_1,1}q_{c_2,2}...q_{c_{n-1},n-1}}\le -q_{c,n}+\max\{q_{c,n},q_{c+1,n}\},
 $$
 та умови
 $$
 \kappa_1<0, ~\text{якщо}~ q_{c+1,n}\omega_1<(1-\omega_2)q_{c,n};
 $$
$$
\kappa_1=0, ~\text{якщо}~ q_{c+1,n}\omega_1=(1-\omega_2)q_{c,n};
$$
 $$
 \kappa_1\ge0, ~\text{якщо}~ q_{c+1,n}\omega_1\ge(1-\omega_2)q_{c,n}.
 $$
 
 Крім того
 $$
 \kappa_2=\sup{\Delta^{(-\tilde Q)} _{c_1c_2...c_{n-1}[c+1]}} -\inf {\Delta^{(-\tilde Q)} _{c_1c_2...c_{n-1}c}}=(q_{c,n}+q_{c+1,n}\omega_2+q_{c,n}\omega_1)\prod^{n-1} _{j=1}{q_{c_j,j}}>0,
 $$
 $$
 \frac{\kappa_2-\kappa_1}{q_{c_1,1}q_{c_2,2}...q_{c_{n-1},n-1}}=(2-\omega_2)q_{c,n}+(q_{c,n}-q_{c+1,n})\omega_1+q_{c+1,n}\omega_2=
$$
$$
=(2-\omega_2)q_{c,n}-(\omega_1-\omega_2)q_{c+1,n}+q_{c,n}\omega_1>0,
 $$
де $\omega_1>\omega_2$.
 
 Отже, у випадку парного $n$  циліндри розташовані ''зліва направо'', але залежно від матриці $\tilde Q$  суміжні циліндри $\Delta^{(-\tilde Q)} _{c_1c_2...c_n}$ можуть або перекриватися, або не перекриватися, або перетинатися в одній точці. 
 
Аналогічно, якщо $n$ --- непарне число, тоді

$$
\kappa_2\equiv\sup{\Delta^{(-\tilde Q)} _{c_1c_2...c_{n-1}[c+1]}}-\inf{\Delta^{(-\tilde Q)} _{c_1c_2...c_{n-1}c}}=-q_{c,n}\prod^{n-1} _{j=1}{q_{c_j,j}}+
$$
$$
+q_{c+1,n}\left(\prod^{n-1} _{j=1}{q_{c_j,j}}\right)\left(a_{m_{n+1},n+1}+\sum^{\infty} _{t=n+2}{\left[\tilde a_{0,t}\prod^{t-1} _{r=n+1}{\tilde q_{0,r}}\right]}\right)+
$$
$$
+q_{c,n}\left(\prod^{n-1} _{j=1}{q_{c_j,j}}\right)\left(\sum^{\infty} _{t=n+2}{\left[\tilde a_{m_{t},t}\prod^{t-1} _{r=n+1}{\tilde q_{m_r,r}}\right]}\right).
$$
Позначивши
$$
\omega_1=a_{m_{n+1},n+1}+\sum^{\infty} _{t=n+2}{\left[\tilde a_{0,t}\prod^{t-1} _{r=n+1}{\tilde q_{0,r}}\right]}
$$
$$
\omega_2=\sum^{\infty} _{t=n+2}{\left[\tilde a_{m_{t},t}\prod^{t-1} _{r=n+1}{\tilde q_{m_r,r}}\right]}
$$
отримаємо
$$
\kappa_2=(-q_{c,n}+q_{c+1,n}\omega_1+q_{c,n}\omega_2)q_{c_1,1}q_{c_2,2}...q_{c_{n-1},n-1}.
$$

Отже, 
$$
-q_{c,n}<\frac{\kappa_2}{q_{c_1,1}q_{c_2,2}...q_{c_{n-1},n-1}}\le-q_{c,n}+\max\{q_{c,n},q_{c+1,n}\}.
$$

Причому,
$$
 \kappa_2<0, ~\text{якщо}~ q_{c+1,n}\omega_1<(1-\omega_2)q_{c,n};
 $$
$$
 \kappa_2=0, ~\text{якщо}~ q_{c+1,n}\omega_1=(1-\omega_2)q_{c,n};
 $$
 $$
 \kappa_2\ge0, ~\text{якщо}~ q_{c+1,n}\omega_1\ge(1-\omega_2)q_{c,n}.
 $$
 
 Оскільки
 $$
 \kappa_1\equiv\sup{\Delta^{(-\tilde Q)} _{c_1c_2...c_{n-1}c}}-\inf{\Delta^{(-\tilde Q)} _{c_1c_2...c_{n-1}[c+1]}}=
$$
$$
=(q_{c,n}+q_{c,n}\omega_1+q_{c+1,n}\omega_2)\left(\prod^{n-1} _{j=1}{q_{c_j,j}}\right)>0
$$
 та
 $$
 \frac{\kappa_1-\kappa_2}{q_{c_1,1}q_{c_2,2}...q_{c_{n-1},n-1}}=(2-\omega_2)q_{c,n}+(q_{c,n}-q_{c+1,n})\omega_1+q_{c+1,n}\omega_2>0,
$$
то у випадку непарного $n$  циліндри розташовані ''справа наліво'', але залежно від матриці $\tilde Q$  суміжні циліндри $\Delta^{(-\tilde Q)} _{c_1c_2...c_n}$ можуть або перекриватися, або не перекриватися, або перетинатися в одній точці. 
\end{enumerate}

Із наведених вище міркувань слідує наступне твердження.

\begin{theorem}
Для довільного числа $x\in[t^{'} _0;t^{''} _0]$ існує послідовність $(i_k)$, $i_k\in N^0 _{m_k}\equiv~\{0,1,...,m_k\}$, така, що
$$
x=-a_{i_1,1}+\sum^{\infty} _{k=2}{\left[(-1)^ka_{i_k,k}\prod^{k-1} _{j=1} {q_{i_j,j}}\right]},
$$
якщо для всіх $k \in \mathbb N$ справедливою є наступна система умов
$$
\left\{
\begin{aligned}
q_{c+1,2k}\left(a_{m_{2k+1},2k+1}+\sum^{\infty} _{t=2}{\left[\tilde a_{m_{2k+t},2k+t}\prod^{2k+t-1} _{r=2k+1}{\tilde q_{m_r,r}}\right]}\right)\ge
q_{c,2k}\left(1-\sum^{\infty} _{t=2}{\left[\tilde a_{0,2k+t}\prod^{2k+t-1} _{r=2k+1}{\tilde q_{0,r}}\right]}\right);\\
q_{c+1,2k-1}\left(a_{m_{2k},2k}+\sum^{\infty} _{t=2}{\left[\tilde a_{0,2k+t-1}\prod^{2k+t-2} _{r=2k}{\tilde q_{0,r}}\right]}\right)\ge
 q_{c,2k-1}\left(1-\sum^{\infty} _{t=2}{\left[\tilde a_{m_{2k+t-1},2k+t-1}\prod^{2k+t-2} _{r=2k}{\tilde q_{m_r,r}}\right]}\right).\\
\end{aligned}
\right.
$$
\end{theorem}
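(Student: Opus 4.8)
The plan is to deduce the theorem from a single structural fact, namely that \emph{for every rank $n$ and every admissible prefix $c_1\ldots c_{n-1}$ the rank-$n$ cylinders $\Delta^{(-\tilde Q)}_{c_1\ldots c_{n-1}c}$, $c\in N^0_{m_n}$, together cover the parent cylinder $\Delta^{(-\tilde Q)}_{c_1\ldots c_{n-1}}$} (for $n=1$ the ``parent'' being the segment $[t'_0;t''_0]$ itself), and that this structural fact is equivalent to the displayed system holding for all $k$ and all admissible $c$. Granting it, sufficiency follows by a nested-interval argument: given $x\in[t'_0;t''_0]$, choose $c_1$ with $x\in\Delta^{(-\tilde Q)}_{c_1}$, then $c_2$ with $x\in\Delta^{(-\tilde Q)}_{c_1c_2}\subseteq\Delta^{(-\tilde Q)}_{c_1}$, and so on; by the closed-interval lemma each $\Delta^{(-\tilde Q)}_{c_1\ldots c_n}$ is a closed interval whose length equals $\prod_{j=1}^{n}q_{c_j,j}$ times the diameter of the representation set shifted to start at column $n+1$ (a quantity that is $<2$ uniformly in $n$), and $\prod_{j=1}^{n}q_{c_j,j}\to0$ by axiom $3^{\circ}$ of \eqref{eq: tilde Q 1}; hence $\bigcap_n\Delta^{(-\tilde Q)}_{c_1\ldots c_n}=\{x\}$, and since the left endpoint of $\Delta^{(-\tilde Q)}_{c_1\ldots c_n}$ is the $n$-th partial sum of \eqref{def: nega-tilde Q 1} up to an error of order $\prod_{j\le n}q_{c_j,j}$, that series converges to $x$ with $i_k:=c_k$. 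Necessity is the converse reading: if the system fails for some $k$ and some $c$, the structural fact fails at the corresponding rank, producing an open interval met by no rank-$n$ cylinder, and a point of that interval can have no nega-$\tilde Q$-expansion since $x=\Delta^{(-\tilde Q)}_{i_1i_2\ldots}$ would force $x\in\Delta^{(-\tilde Q)}_{i_1\ldots i_n}$ for every $n$.

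To establish the structural fact, fix $n$ and $c_1\ldots c_{n-1}$. Because the rank-$n$ term of \eqref{def: nega-tilde Q 1} carries the sign $(-1)^n$ and $a_{i_n,n}$ is increasing in $i_n$, the explicit endpoint expressions from the closed-interval lemma give, for even $n$, $\inf\Delta^{(-\tilde Q)}_{c_1\ldots c_{n-1}}=\inf\Delta^{(-\tilde Q)}_{c_1\ldots c_{n-1}0}$ and $\sup\Delta^{(-\tilde Q)}_{c_1\ldots c_{n-1}}=\sup\Delta^{(-\tilde Q)}_{c_1\ldots c_{n-1}m_n}$, and the reverse for odd $n$; so the two extreme siblings already attain the endpoints of the parent. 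An induction on $c$ then shows that the siblings cover the parent exactly when there is no ``gap on the right'' between consecutive siblings for even $n$, i.e.\ $\sup\Delta^{(-\tilde Q)}_{c_1\ldots c_{n-1}c}\ge\inf\Delta^{(-\tilde Q)}_{c_1\ldots c_{n-1}[c+1]}$ for every $c$ (no ``gap on the left'', with $\sup$ and $\inf$ interchanged, for odd $n$), and when one of these fails the corresponding open interval lies strictly between two \emph{consecutive} rank-$n$ cylinders. It then remains only to rewrite these gap inequalities in the stated form, which is precisely the computation already carried out before the theorem: for even $n=2k$ the condition $\sup-\inf\ge0$ is $\kappa_1\ge0$, equivalent to $q_{c+1,2k}\omega_1\ge(1-\omega_2)q_{c,2k}$ with $\omega_1$ and $\omega_2$ the two tail sums in the first line of the system; for odd $n=2k-1$ the condition $\kappa_2\ge0$ gives the second line.

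The step I expect to need the most care is this covering characterisation, and inside it two points. First is the parity bookkeeping: tracking which of $c=0$, $c=m_n$ corresponds to which endpoint of the parent, and which of $\kappa_1,\kappa_2$ is the relevant gap, as the parity of $n$ alternates. Second, and more delicate, is the claim used for necessity that a failed inequality yields an interval met by \emph{no} rank-$n$ cylinder, not merely by no sibling of the offending pair; the cleanest route I see is to linearly order all rank-$n$ index words by the parity-twisted lexicographic order (natural order on even coordinates, reverse order on odd coordinates, first coordinate most significant), to observe that the two offending siblings are consecutive in this order, and to check from the closed-interval endpoint formulas that consecutive words in this order give cylinders whose left endpoints and right endpoints are both non-decreasing along the order, so that the offending open interval falls between two cylinders that are consecutive among all rank-$n$ cylinders and is therefore disjoint from every one of them.
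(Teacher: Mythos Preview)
Your approach is essentially the paper's own: the paper does not give a separate proof of the theorem but presents it as the summary of the preceding cylinder analysis (Lemma on closedness of $\Delta^{(-\tilde Q)}_{c_1\ldots c_n}$ and the $\kappa_1,\kappa_2$ computations in item~(3)), and you are simply making explicit the nested-interval argument that ties those pieces together. Your treatment of necessity via the parity-twisted lexicographic order goes beyond what the paper writes out, since the paper's statement is phrased as a one-directional ``if'' and no converse is argued; that extra direction is a reasonable addition, though note your monotonicity claim for endpoints along that order needs the no-gap condition at all \emph{earlier} ranks to hold (otherwise cylinders with different length-$(n-1)$ prefixes can interleave), so the clean necessity argument should locate the failure at the \emph{smallest} rank where the system breaks.
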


\section{Побудова та дослідження ряду \eqref{def: nega-tilde Q 2}}

Нехай маємо деяку матрицю ${\tilde Q}^{'}=||\tilde q_{i,j}||$ ($i=\overline{0,m_j}$, $m_j \in \mathbb N \cup \{0,\infty\}$, $j=1,2,...$), яка має ті ж самі властивості, що й матриця $\tilde Q$.

Розіб'ємо $[0;1]$ точками $\tilde a_{0,1}, \tilde a_{1,1},..., \tilde a_{m_1,1}$ на відрізки, які, слідуючи ''зліва направо'', позначимо  відповідно $\Delta^{-\tilde Q} _{0}, \Delta^{-\tilde Q} _{1},...,$ і назвемо їх відрізками першого рангу. Очевидно, що $|\Delta^{-\tilde Q} _{i_1}|=\tilde q_{i_1,1}$. Кожен з відрізків першого рангу $\Delta^{-\tilde Q} _{i_1}$, слідуючи  ''справа наліво'', розіб'ємо (точками $\tilde a_{i_1+1,1},\tilde a_{0,2}, \tilde a_{1,2},...$) на відрізки другого рангу $\Delta^{-\tilde Q} _{i_1i}$, де $i=\overline{0,m_2}$, таким чином, що $|\Delta^{-\tilde Q} _{i_1i}|=\tilde q_{i_1,1}\tilde q_{i,2}$ і т. д..  Кожен відрізок $(2k-1)$-го рангу  $\Delta^{-\tilde Q} _{i_1i_2...i_{2k-1}}$,  слідуючи ''справа наліво'', розіб'ємо на відрізки $2k$-го рангу $\Delta^{-\tilde Q} _{i_1i_2...i_{2k-1}i}$, де $i=\overline{0,m_{2k}}$, так, що
$$
|\Delta^{-\tilde Q} _{i_1i_2...i_{2k-1}i}|=\tilde q_{i,2k}\prod^{2k-1} _{j=1} {\tilde q_{i_j,j}},
$$
а кожний відрізок $2k$-го рангу  $\Delta^{-\tilde Q} _{i_1i_2...i_{2k-1}i_{2k}}$, слідуючи  ''зліва направо'', розіб'ємо на відрізки $(2k+1)$-го рангу $\Delta^{-\tilde Q} _{i_1i_2...i_{2k}i}$, $i=\overline{0,m_{2k+1}}$, так, що
$$
|\Delta^{-\tilde Q} _{i_1i_2...i_{2k}i}|=\tilde q_{i,2k+1}\prod^{2k} _{j=1} {\tilde q_{i_j,j}},
$$
і т. д. В результаті отримаємо систему відрізків різних рангів, які володіють властивостями:
\begin{enumerate}
\item $\Delta^{-\tilde Q} _{i_1i_2...i_{k}i}\subset \Delta^{-\tilde Q} _{i_1i_2...i_{k}}$ для всіх $i\in N^{0} _{m_{k+1}}$;
\item
$$
|\Delta^{-\tilde Q} _{i_1i_2...i_{k}}|=\prod^{k} _{j=1}{\tilde q_{i_j,j}}\to 0~~~(k \to\infty).
$$
\end{enumerate}

Тому за аксіомою Кантора кожна послідовність $(i_k)$, $i_k\in N^{0} _{m_k}$, а разом з нею і послідовність відрізків $(\Delta^{-\tilde Q} _{i_1i_2...i_{k}})$ визначає єдину точку $x\in [0;1)$, яку позначатимемо $\Delta^{-\tilde Q} _{i_1i_2...i_{k}...}$. І навпаки, для кожної точки $x\in [0;1)$ існують відрізки $\Delta^{-\tilde Q} _{i_1(x)}$, $\Delta^{-\tilde Q} _{i_1(x)i_2(x)}$, ..., $\Delta^{-\tilde Q} _{i_1(x)i_2(x)...i_k(x)}$, ..., які містять $x$ та 
$$
x=\bigcap^{\infty} _{k=1}{\Delta^{-\tilde Q} _{i_1(x)i_2(x)...i_k(x)}}=\Delta^{-\tilde Q} _{i_1(x)i_2(x)...i_k(x)...}.
$$

Накладемо умови, щоб $\tilde Q^{'}=\tilde Q$. Тобто,
$$
\tilde q_{i_j,j}=\begin{cases}
q_{i_j,j},&\text{якщо $j$ --- непарне;}\\
q_{m_j-i_j,j},&\text{якщо $j$ --- парне.}
\end{cases}
$$

Рівність $x=\Delta^{-\tilde Q} _{i_1(x)i_2(x)...i_k(x)...}$ називатимемо \emph{нега-$\tilde Q$-зображенням числа $x$}.

Якщо $x=\Delta^{-\tilde Q} _{i_1(x)i_2(x)...i_k(x)...}$, то існує рівно $i_1$ відрізків 1-го рангу, які лежать лівіше точки $x$; 
$m_2-i_2$ відрізків 2-го рангу, які належать $\Delta^{-\tilde Q} _{i_1(x)}$ і лежать лівіше $x$; $i_{2k-1}$ відрізків $(2k-1)$-го рангу, які належать $\Delta^{-\tilde Q} _{i_1(x)i_2(x)...i_{2k-2}(x)}$, лежать лівіше $x$ і мають сумарну довжину 
$$
a_{i_{2k-1},2k-1}\prod^{2k-2} _{j=1}{\tilde q_{i_j,j}};
$$
$m_{2k}-i_{2k}$ відрізків $2k$-го рангу, які належать $\Delta^{-\tilde Q} _{i_1(x)i_2(x)...i_{2k-1}(x)}$, лежать лівіше  $x$ і мають сумарну довжину
$$
a_{m_{2k}-i_{2k},2k}\prod^{2k-1} _{j=1}{\tilde q_{i_j,j}},
$$
і т. д. Тому для будь-якого $x\in [0;1)$ існує послідовність $(i_k(x))$,  $i_k(x)\in N^{0} _{m_k}$, така, що 
\begin{equation}
\label{def: nega-tilde Q 3}
x=a_{i_1(x),1}+\sum^{\infty} _{k=2}{\left[\tilde a_{i_k(x),k}\prod^{k-1} _{j=1}{\tilde q_{i_j(x),j}}\right]}.
\end{equation}

\begin{remark} Варто зазначити, що розклад \eqref{def: nega-tilde Q 3} є узагальненням взаємозв'язку згаданих вище знакопочережних представлень із відповідними знакододатними представленнями. Тобто, 
\begin{itemize}
\item якщо $q_{i_j,j}=\frac{1}{s}$, $1<s$ --- фіксоване натуральне число, $i=\overline{0,s-1}$, $j=1,2,...$, тоді ряд \eqref{def: nega-tilde Q 3}  набуде виду
$$
\frac{\alpha_1}{s}+\frac{s-1-\alpha_2}{s^2}+...+\frac{\alpha_{2k-1}}{s^{2k-1}}+\frac{s-1-\alpha_{2k}}{s^{2k}}+...\equiv \frac{1}{s+1}-\sum^{\infty} _{k=1}{\frac{(-1)^k\alpha_k}{s^k}};
$$
\item якщо $q_{i_j,j}=\frac{1}{d_j}$, $(d_j)$ --- фіксована послідовність натуральних чисел, більших~$1$, $i=\overline{0,d_j-1}$, $j=1,2,...$, тоді ряд \eqref{def: nega-tilde Q 3}  набуде виду
$$
\frac{\varepsilon_1}{d_1}+\frac{d_2-1-\varepsilon_2}{d_1d_2}+\frac{\varepsilon_3}{d_1d_2d_3}+\frac{d_4-1-\varepsilon_4}{d_1d_2d_3d_4}+...\equiv \sum^{\infty} _{k=1}{\frac{d_{2k}-1}{d_1d_2...d_{2k}}}-\sum^{\infty} _{k=1}{\frac{(-1)^k\varepsilon_k}{d_1d_2...d_{k}}}.
$$
\end{itemize}
\end{remark}

Таким чином,
$$
\Delta^{-\tilde Q} _{i_1i_2...i_k...}\equiv \Delta^{\tilde Q} _{i_1[m_2-i_2]...i_{2k-1}[m_{2k}-i_{2k}]...}
$$
і навпаки
$$
 \Delta^{\tilde Q} _{i_1i_2...i_k...}\equiv\Delta^{-\tilde Q} _{i_1[m_2-i_2]...i_{2k-1}[m_{2k}-i_{2k}]...}.
$$

Оскільки
$$
a_{{m_{2k}-i_{2k}},2k}=q_{0,2k}+q_{1,2k}+...+q_{m_{2k}-i_{2k}-1,2k}=1-q_{m_{2k}-i_{2k},2k}-q_{m_{2k}-i_{2k}+1,2k}-...-q_{m_{2k},2k},
$$
то
$$
\Delta^{-\tilde Q} _{i_1i_2...i_k...}\equiv \sum^{i_1-1} _{i=0} {q_{i,1}}+\sum^{\infty} _{k=2}{\left[(-1)^{k-1}\tilde \delta_{i_k,k}\prod^{k-1} _{j=1}{\tilde q_{i_j,j}}\right]}+\sum^{\infty} _{k=1}{\left(\prod^{2k-1} _{j=1}{\tilde q_{i_j,j}}\right)}.
$$

\begin{center}

\emph{Ключові слова:} дійсне число, нега-s-кове зображення, ряди Кантора, $\tilde Q$-зображення,  нега-$\tilde Q$-зображення.

\emph{Keywords:} real number, nega-s-adic representation, Cantor series, $\tilde Q$-representation, nega-$\tilde Q$-representation.

\end{center}


\begin{thebibliography}{9}

\bibitem{P09} \emph{Працьовита~І.~М.} Про розклади чисел в знакозмінні s-адичні ряди і ряди Остроградського 1- та 2-го видів 
  //~Укр. мат. журн.--- 2009. --- Т.~61, №~7. --- С.~958 --- 968.

\bibitem{Pra08} \emph{Працьовитий~М.~В., Гетьман Б. І.} Зображення чисел s-адичними рядами Енгеля
  //~Науковий часопис НПУ імені М. П. Драгоманова. Серія 1. Фізико-математичні науки. "--- Київ: НПУ імені М. П. Драгоманова."--- 2008, №9."--- С.~212 ---224.

\bibitem{Pra98} \emph{Працьовитий~М.~В.} Фрактальний підхід у дослідженнях сингулярних розподілів. "--- Київ: Вид-во НПУ імені М. П. Драгоманова,
  1998. "--- 296~с.

\bibitem{Ralko10} \emph{Ралко Ю. В.} Зображення чисел рядами Кантора та деякі його застосування
  //~Науковий часопис НПУ імені М. П. Драгоманова. Серія 1. Фізико-математичні науки. "--- Київ: НПУ імені~М.~П.~Драгоманова."--- 2009, № 10."--- С.~132 ---140.

\bibitem{Symon3} {\it Сербенюк С. О.}
Зображення чисел знакододатними рядами Кантора: задання рацiональних чисел// Науковий часопис НПУ імені М. П. Драгоманова. Серія 1. Фізико-математичні науки.~--- Київ: НПУ імені М. П. Драгоманова. --- 2013, №14. --- С.~253-267.

\bibitem{Symon4} {\it Сербенюк С. О.}
Про деякi множини дiйсних чисел, визначенi в термiнах нега-s-кового та
канторiвського нега-s-кового зображень// Науковий часопис НПУ імені М. П. Драгоманова. Серія 1. Фізико-математичні науки.~--- Київ: НПУ імені М. П. Драгоманова. --- 2013, №15. --- С.~168-187.


\bibitem{TurbinPrats} {\emph Турбин~А.~Ф.,  Працевитый Н. В.} Фрактальные множества,
функции, распределения~--- К.: Наук. думка, 1992.~---~208~с.

\bibitem{Cantor1} \emph{Cantor~G.} Ueber die einfachen Zahlensysteme
  //~Z. Mathl. Phys. "--- 1869."--- Bd. 14."--- S.~121--128.


\bibitem{Symon-altern.series-2015} {\it Symon Serbenyuk.}
Representation of real numbers  by the alternating Cantor series, arXiv:1602.00743v1. 
\end{thebibliography}
\end{document}